\newtheorem{theorem}{Theorem}
\newtheorem{corollary}{Corollary}
\newtheorem{proposition}{Proposition}
\newenvironment{proof}[1][Proof]{\textbf{#1.} }{\ \rule{0.5em}{0.5em}}
\long\def\symbolfootnote[#1]#2{\begingroup%
	\def\thefootnote{$\;$}\footnote[#1]{$^*$#2}\endgroup}
\begin{document}
	
	\title{On special partitions of metric spaces}
	\author{Ryszard Frankiewicz and Joanna Jureczko\footnote{The second author is partially supported by Wroc\l{}aw Univercity of Science and Technology grant of K34W04D03 no. 8201003902.}}
\maketitle

\symbolfootnote[2]{Mathematics Subject Classification: Primary 03C25, 03E35, 03E55, 54E52.

	\hspace{0.2cm}
	Keywords: \textsl{Kuratowski partition, $K$-partition, precipitous ideal, K-ideal, real-measurable cardinal, measurable cardinal.}}

\begin{abstract}
The main result of this paper is to show that, if $\kappa$ is the smallest real-valued measurable cardinal not greater than $ 2^{\aleph_0}$, then there exists a complete metric space of cardinality not greater than $ 2^{\kappa}$ admitting a Kuratowski partition.   
\end{abstract}

\maketitle

\section{Introduction}

In  1935, K. Kuratowski, in \cite{KK},  followed the results of Lusin published in 1912, (\cite{NL}),  posed the problem  whether a function $f \colon X \to~Y$, (where $X$ is completely metrizable and $Y$ is metrizable), such that each preimage of an open set of $Y$ has the Baire property, is continuous apart from a meager set.

In the 1970s, R. H.  Solovay (unpublished result) proved, using forcing methods, (precisely the generic ultrapower), that any partition of $[0,1]$ into Lebesgue-measure zero sets produces a non-measurable set. A few years later, L.  Bukovsky \cite{LB} advanced a shorter and less complicated proof than that of Solovay.

At about the same time, A. Emeryk, R. Frankiewicz, and W. Kulpa  \cite{EFK} demonstrated that Kuratowski's problem is equivalent to asserting the existence of partitions of completely metrizable spaces into meager sets such that the union of each subfamily  of this partition has the Baire property. Such a partition is called a \textit{Kuratowski partition}, or, stated more briefly, a \textit{$K$-partition}. (The next section provides a formal definition.)

Our paper provides a perspective to the problem other than that of R. Frankiewicz and K. Kunen \cite{FK}, who, among others, proved that, if ZFC + "there exists a $K$-partition of a Baire metric space" is consistent, then ZFC + "there exists measurable cardinal" is consistent as well, by using forcing methods and the localization property. 

Thus, the main result of this paper is to show that, if $\kappa$ is the smallest real-valued measurable cardinal not greater than $ 2^{\aleph_0}$, then there exists a complete metric space of cardinality not greater than $ 2^{\kappa}$ admitting Kuratowski partition. 

Previously, we thought to use for this purpose a special ideal type associated with $K$-partition of a given space, which \cite{JJ} refers to as a \textit{$K$-ideal}, but this was unworkable for two reasons. First, we incorrectly assumed that, based on the structure of such a $K$-ideal, complete information about the $K$-partition of a given space could be “decoded”. Unfortunately, this is not the case, however, because, as shown in \cite{JJ}, the structure of such an ideal can be almost arbitrary, i.e., it can be the Fr\'echet ideal, and so, by \cite[Lemma 22.20, p. 425]{TJ}, it is not precipitous when $\kappa$ is regular. (To show this, we use, as in \cite{JJ}, direct sums of a space.)
Moreover, as demonstrated in \cite{JJ}, for a measurable cardinal $\kappa$, a $\kappa$-complete ideal can be represented by some $K$-ideal. However, if $\kappa =|\mathcal{F}|$ is not a measurable cardinal, where $\mathcal{F}$ is a $K$-partition of a given space, one can obtain an $|\mathcal{F}|$-complete ideal that can be the Fr\'echet ideal or a $\kappa$-complete ideal representing some $K$-ideal, or it can be a proper ideal of such a $K$-ideal and so contains the Fr\'echet ideal. Thus, for obtaining a $K$-partition from a $K$-ideal, we need to have complete information about the space in which the ideal is being considered.

Secondly, if we want to use the "idea" of a K-ideal, we must additionally assume that the space $X(I)$, where $I$ is K-ideal, is complete (see \cite{FK} or the next section for formal definitions) because, as will be shown in Proposition 1 and Theorem 1 (see Section 3), only if $X(I)$ is complete can $I$ be maximal. Thus, the assumption used in Theorem 2 is the only one with which we can show the existence of a complete metric space with $K$-partition.

From \cite{EFK}, it follows that the restriction "$\kappa \leqslant 2^{\aleph_0}$" on the cardinality of the $K$-partition used in Theorem 2 comprises the real restriction. (To simplify, we assume that $\kappa$ is real-measurable.) Moreover, a space with smaller cardinality admits the existence of a $K$-partition, but this does not enlarge on the completion of this space. (Compare with \cite{JJ}.) 

In summary, Theorem 2 (which also applies to an arbitrary metric space) was a missing result in $K$-partition considerations and so serves to complete investigations of Kuratowski's problem.

As a consequence of our main result (i.e., Theorem 2), we were able to obtain the result presented in \cite{FK}. (See Fact 2 in Section 3 and \cite[Lemma 5.1]{FJW}.)

Worth mentioning is that the subject of this paper has wider applications, including some in measurable selector theory and related topics.

This paper consists of three  sections. Section 2 provides relevant definitions and previous results, including those involving $K$-partitions, precipitous ideals, and measurable cardinals. Our main results, including Theorem 2, are presented in Section 3, and the paper’s final section then discusses Theorem 2’s implications.

Section 2 provides a number of well-known definitions. For definitions and facts not cited there, however, we refer interested readers to, e.g., \cite{KK1} (topology) and \cite{TJ} (set theory).

\section{Definitions and previous results}

Throughout the paper, we assume that $X$ is a Baire space, i.e., a space in which the Baire theorem holds.
\\\\
\textbf{2.1.}   A set $U \subseteq X$ has \textit{the Baire property} iff there exists an open set $V \subset X$ and a meager set $M \subset X$ such that $U = V \triangle M$, where $\triangle$ represents the symmetric difference of sets.
\\
\\
\textbf{2.2.} A partition $\mathcal{F}$ of $X$ into meager subsets of $X$ is called a \textit{Kuratowski partition}, (or, in short, a \textit{$K$-partition}), iff $\bigcup \mathcal{F}'$ has the Baire property for all $\mathcal{F}' \subseteq \mathcal{F}$.

Throughout this paper, we assume that $$\kappa = \min\{|\mathcal{F}|\colon \mathcal{F} \textrm{ is $K$-partition of }X \}.$$
For a given regular cardinal $\kappa$, we enumerate a $K$-partition as follows:
 $$\mathcal{F} = \{F_\alpha \colon \alpha < \kappa\}.$$

(We can assume that each subspace of $X$ with cardinality less than $\kappa$ is not Baire.)

For an open set $U \subseteq X$ treated as a subspace of $X$ that is Baire, the family $$\mathcal{F}\cap U = \{F \cap U \colon F \in \mathcal{F}\}$$ is a $K$-partition of $U$.
\\
\\
\textbf{2.3.} With any $K$-partition
$\mathcal{F} = \{F_\alpha \colon \alpha < \kappa\}$ indexed by a regular cardinal $\kappa$,  one may associate an ideal 
$$I_\mathcal{F} = \{A \subset \kappa \colon \bigcup_{\alpha \in A} F_\alpha \textrm{ is meager}\},$$
which is called the \textit{$K$-ideal} (see \cite{JJ}).
\\
Note that $I_\mathcal{F}$ is a non-principal ideal. Moreover, $[\kappa]^{< \kappa} \subseteq I_{\mathcal{F}}$ because $\kappa = \min\{|\mathcal{G}| \colon \mathcal{G} \textrm{ is a $K$-partition of }X\}$.
\\
\\
\textbf{2.4.} Let $I$ be an ideal on $\kappa$ and let $S$ be a set with positive measure, i.e., $S \in P(\kappa) \setminus I$. (For convenience, we use $I^+$ instead of $P(\kappa) \setminus I$ throughout). 
\\An \textit{$I$-partition} of $S$ is a maximal family $W$ of subsets of $S$ of positive measure such that $A \cap B \in I$ for all distinct $A, B \in W$.

An $I$-partition $W_1$ of $S$ is a \textit{refinement} of an $I$-partition $W_2$ of $S$ ($W_1 \leq~W_2$) iff each $A \in W_1$ is a subset of some $B\in W_2$.

If $I$ is a $\kappa$-complete ideal on $\kappa$ containing singletons, then $I$ is \textit{precipitous} iff, whenever $S \in I^+$ and $\{W_n \colon n < \omega\}$ is a sequence of  $I$-partitions of $S$ such that 
$W_0 \geq W_1\geq ... \geq W_n \geq ...$,
there exists a sequence of sets
$X_0 \supseteq X_1\supseteq ... \supseteq X_n \supseteq ...$
such that $X_n \in W_n$ for each $n\in \omega$ and $\bigcap_{n=0}^{\infty} X_n \not = \emptyset$ (see also \cite[p. 424-425]{TJ}).

Let $X$ be a Baire metric space and $\mathcal{F}$ be a $K$-partition of $X$.
The ideal $I_{\mathcal{F}}$ is \textit{everywhere precipitous} iff $I_{\mathcal{F}\cap U}$ is precipitous for each non-empty open set $U \subseteq X$. (Obviously, it can be deduced from Union Theorem  \cite[p. 82]{KK1}). 
	\\\\
	\textbf{Fact 1 (\cite{FJ})} Let $X$ be a Baire metric space with $K$-partition $\mathcal{F}$ of cardinality $\kappa = \min\{|\mathcal{G}| \colon \mathcal{G} \textrm{ is a $K$-partition for  }X\}$.   Then there exists an open set $U \subset X$ such that the $K$-ideal $I_{\mathcal{F}\cap U}$ on $\kappa$ associated with $\mathcal{F}\cap U$ is precipitous.	
\\
\\	
\textbf{2.5.}
Let $\lambda$ be a cardinal.
An ideal $I$ is \textit{$\lambda$-saturated} iff there exists no $I$-partition $W$ of size $\lambda$.
Then,
$$sat(I) \textrm{ is the smallest $\lambda$ such that $I$ is $\lambda$-saturated}.$$

\noindent
\textbf{2.6.}
An uncountable cardinal $\kappa$ is \textit{measurable} iff there exists a non-principal maximal and $\kappa$-complete ideal on $\kappa$.
\\
\\
\textbf{Fact 2 (\cite{FK}) }  ZFC +  "there exists measurable cardinal" is equiconsistent with ZFC + "there exists a Baire metric space with a $K$-partition of cardinality $\kappa$".
\\
\\
\textbf{Fact 3 (\cite{TJ} )} 
(a) If $\kappa$ is a regular uncountable cardinal that carries a precipitous ideal, then $\kappa$ is measurable in some transitive model of ZFC.

(b) If $\kappa$ is a  measurable cardinal, then there exists a generic extension in which $\kappa = \aleph_1$, and $\kappa$ carries a precipitous ideal.
\\
\\
\textbf{2.7.} Let $I$ be an ideal over a cardinal $\kappa$, and let 
$$X(I) = \{x \in (I^+)^\omega \colon \bigcap \{x(n) \colon n \in \omega\} \not = \emptyset \textrm{ and } \forall_{n \in \omega}\ \bigcap\{x(m) \colon m < n\} \in I^+\}.$$
The  set $X(I)$ is considered to be a subset of a complete metric space $(I^+)^\omega$, where the set $I^+$ is equipped with the discrete topology (see also \cite{FK}).
\\\\
\textbf{Fact 4 (\cite{FK}) } $X(I)$ is a Baire space iff $I$ is a precipitous ideal.
\\\\
\textbf{Fact 5 (\cite{FK})} Let $I$ be a precipitous ideal over some regular cardinal. Then there is a $K$-partition of $X(I)$.
\\\\
\textbf{2.8.} 
A \textit{nontrivial measure} on  $X$ is a map $\mu\colon P(X) \to [0,1]$  such that $\mu$ is a countably additive measure vanishing on points with $\mu(X) =1$ (where $P(X)$ represents the power set of $X$).

A measure  $\mu$ is \textit{$\kappa$-additive} whenever $\{A_\xi\colon \xi < \lambda\}$ is a family of sets of measure zero and $\lambda< \kappa$ then $\bigcup_{\xi < \lambda} A_\xi$ is measure zero.
There exists the largest $\kappa$ such that $\mu$ is $\kappa$-additive. Then,
$$add(\mu) = \min \{\kappa \colon \mu (\bigcup_{\xi < \kappa}A_\xi) > 0, \mu (A_\xi) = 0\}.$$

A cardinal $\kappa$ is \textit{real-valued} iff $\kappa$ carries a nontrivial $\kappa$-additive measure.
\\\\
\textbf{Fact 6 (\cite{SU, RS})} Let $\kappa$ be real-valued measurable. If $\kappa \leqslant 2^{\aleph_0}$, then there is an extension $\mu$ of Lebesgue measure defined on all subsets of $\mathbb{R}$ with $add(\mu) = \kappa$.
\\\\
\textbf{Fact 7 (\cite{RS})}  The following theories are equiconsistent.
\\(1) ZFC + "there is a measurable cardinal".
\\(2) ZFC + "a Lebesgue measure has a countably additive extension $\mu$ defined on every set of reals".
\\\\
\textbf{Fact 8 (\cite{RS})} Let $\kappa$ be a real-valued measurable cardinal, and let $\mu$ be a nontrivial $\kappa$-additive real-valued measure on $\kappa$. Then, $I = \{A\subseteq \kappa \colon \mu(A) = 0\}$ is a nontrivial ideal in $P(\kappa)$.
\\\\
\textbf{Fact 9 (Ulam, \cite{RS})}  Let $\kappa$ be a real-valued measurable cardinal, and let $\mu$ be a nontrivial  measure on $\kappa$. Then, $I = \{A\subseteq \kappa \colon \mu(A) = 0\}$ is $\aleph_1$-saturated.
\\\\
Obviously, $I$ defined in Fact 8 and Fact 9 is precipitous (compare with \cite[Lemma 22.22]{TJ}).

\section{Main results}

\begin{proposition} Let $X$ be a space with K-partition $\mathcal{F}$. Let $\kappa$ be a regular cardinal such that $\kappa = \{|\mathcal{G}| \colon \mathcal{G} \textrm{ is a K-partition of } X\}$. Let $I_{\mathcal{F}}$ be a K-ideal associated with $\mathcal{F}$. If $X(I_{\mathcal{F}})$ is complete, then there exists an open set $U \subset X$ such that  $I_{\mathcal{F}\cap U}$ is maximal.
	\end{proposition}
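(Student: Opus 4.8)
The plan is to recast the hypothesis as a purely combinatorial property of $I_{\mathcal F}$ and then to locate an atom of the quotient $P(\kappa)/I_{\mathcal F}$. Since $(I^+)^\omega$ is a complete metric space and a subspace of a complete space is complete iff it is closed, ``$X(I_{\mathcal F})$ is complete'' is equivalent to ``$X(I_{\mathcal F})$ is closed in $(I^+)^\omega$''. I would first compute the closure: the condition $\forall n\ \bigcap\{x(m):m<n\}\in I^+$ is preserved under coordinatewise limits, while the condition $\bigcap\{x(n):n\in\omega\}\neq\emptyset$ is the feature that can be lost. Moreover, for any $x$ satisfying the former, the truncations agreeing with $x$ on the first $N$ coordinates and then constantly equal to $\bigcap\{x(m):m<N\}\in I^+$ lie in $X(I_{\mathcal F})$ and converge to $x$. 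Hence $\overline{X(I_{\mathcal F})}$ is exactly the set of $x$ with all finite partial intersections positive, and passing to partial intersections shows that $X(I_{\mathcal F})$ is closed precisely when the following holds: $(\star)$ every decreasing sequence $S_0\supseteq S_1\supseteq\cdots$ of sets in $I_{\mathcal F}^+$ has $\bigcap_n S_n\neq\emptyset$.

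The heart of the argument is the claim that $(\star)$ forces $P(\kappa)/I_{\mathcal F}$ to possess an atom, i.e. a positive set $A$ that cannot be partitioned into two positive pieces. I would argue by contraposition. If the quotient is atomless, then every positive set splits into two disjoint positive sets, so beginning with $B_0=\kappa$ I can recursively pick disjoint positive $A_n$ and positive $B_{n+1}$ with $B_n=A_n\sqcup B_{n+1}$. Folding the residue $B_\infty=\bigcap_n B_n$ into $A_0$ produces an honest partition $\kappa=\bigsqcup_n P_n$ all of whose tails $S_n=\bigcup_{m\ge n}P_m\supseteq A_n$ are positive; the $S_n$ decrease, and since the $P_n$ partition $\kappa$ each ordinal lies in a unique $P_m$ and is dropped from $S_{m+1}$, so $\bigcap_n S_n=\emptyset$, contradicting $(\star)$. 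I expect this to be the main obstacle: the delicate point is to obtain a decreasing chain with \emph{genuinely empty} (not merely null) intersection, which is exactly what the partition-with-positive-tails construction delivers. Note that only properness of $I_{\mathcal F}$ (positive sets are nonempty) is used here, not $\kappa$-completeness.

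Finally I would convert the atom into the desired open set. For $A\subseteq\kappa$ write $\bigcup_{\alpha\in A}F_\alpha=V_A\triangle M_A$ with $V_A$ open and $M_A$ meager, which is possible because $\mathcal F$ is a $K$-partition; since $X$ is Baire, $A\in I_{\mathcal F}$ iff $V_A=\emptyset$. Take $U=V_A$ for the atom $A$, a nonempty open set. Writing $P=^{*}Q$ for ``$P\triangle Q$ is meager'', disjointness of the blocks yields $V_A\cap V_C=^{*}\bigcup_{\alpha\in A}F_\alpha\cap\bigcup_{\alpha\in C}F_\alpha=\bigcup_{\alpha\in A\cap C}F_\alpha=^{*}V_{A\cap C}$ for every $C\subseteq\kappa$, and similarly with $C$ replaced by $\kappa\setminus C$. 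Because $A$ is an atom, either $A\cap C\in I_{\mathcal F}$ or $A\setminus C\in I_{\mathcal F}$; in the first case $U\cap V_C$ is a meager open set, hence empty, giving $C\in I_{\mathcal F\cap U}$, and in the second case $\kappa\setminus C\in I_{\mathcal F\cap U}$. As $C$ was arbitrary, $I_{\mathcal F\cap U}$ is maximal. As a consistency check, completeness of $X(I_{\mathcal F})$ already yields, via the Baire category theorem and Fact 4, that $I_{\mathcal F}$ is precipitous, so the whole construction lives inside a precipitous ideal.
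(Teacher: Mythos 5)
Your proof is correct, but it takes a genuinely different route from the paper's. The paper argues by pure contradiction: assuming $I_{\mathcal{F}\cap V}$ is non-maximal for \emph{every} open $V$, it reduces (without loss of generality) to infinite saturation, fixes a disjoint family $\mathcal{U}$ of open sets with dense union, builds inside each $U\in\mathcal{U}$ the tails $B_k^U$ of a countable disjoint family of positive sets, sets $B_k=\bigcup_{U\in\mathcal{U}}B_k^U$ (positive by the Union Theorem), and then produces a Cauchy sequence in $X(I_{\mathcal{F}})$ whose limit would have to be $(B_k)_{k\in\omega}$, which violates the nonempty-intersection clause in the definition of $X(I_{\mathcal F})$. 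You share only the underlying mechanism --- a decreasing $\omega$-sequence of positive sets with empty intersection is incompatible with completeness --- and you isolate it as the clean equivalence $(\star)$ via the closure computation; the paper's Cauchy-sequence construction is in effect the proof of one direction of your equivalence. From there you diverge: rather than negating the conclusion and juggling saturation and open covers, you show $(\star)$ forces an atom of $P(\kappa)/I_{\mathcal F}$ (the recursive splitting argument), and then convert the atom into an explicit open set $U=V_A$, checking maximality of $I_{\mathcal{F}\cap U}$ by symmetric-difference bookkeeping. This decomposition buys several things. It is more elementary and self-contained: you never need Fact 4, precipitousness, the Union Theorem, or the paper's two ``without loss of generality'' reductions (everywhere precipitous; infinite saturation) --- the second of which, incidentally, implicitly requires exactly the atom-to-open-set translation you carry out explicitly. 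It produces the witnessing open set constructively instead of by contradiction. And it sidesteps a delicate point in the paper's argument: for $B_k=\bigcup_{U\in\mathcal{U}}B_k^U$ the claim $\bigcap_{k\in\omega}B_k=\emptyset$ is not immediate, since an ordinal could lie in $B_k^{U_k}$ for a \emph{different} $U_k$ at each $k$, whereas your tails $S_n$ come from a genuine partition of $\kappa$ and so manifestly have empty intersection. What the paper's approach buys in exchange is that it stays inside the precipitous-ideal framework (Fact 4 and the geometry of dense open families) on which the rest of the paper is built.
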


\begin{proof}
By assumption, $X(I_{\mathcal{F}})$ is complete, and hence the Baire theorem holds. By Fact 4, $I_{\mathcal{F}}$ is precipitous. Without the loss of generality we can assume that $I_{\mathcal{F}}$ is everywhere precipitous.
Let $\mathcal{F} = \{F_\alpha \colon \alpha < \kappa\}$.

Suppose that $I_{\mathcal{F}\cap V}$ is not maximal for any open  $V\subset X$. Without loss of generality, we can assume that $sat(I_{\mathcal{F}\cap V})$ is infinite for any open  $V\subset X$. Let $\mathcal{U}$ be a disjoint family of open sets, such that $\bigcup \mathcal{U}$ is dense in $X$.  Fix $U \subset \mathcal{U}.$ Hence, there exists a countable family $$\mathcal{A}^U = \{A_n^U \colon n \in \omega, A_n^U\in I^+_{\mathcal{F}\cap U}, A_n^U \cap A_m^U = \emptyset \textrm{ for any } n \not = m\}.$$
Define $$B_0^U = \bigcup_{n=0}^{\infty} A_n^U, B_1^U = \bigcup_{n=1}^{\infty} A_n^U, ..., B_k^U = \bigcup_{n=k}^{\infty} A_n^U, ...\ . $$ Then,
$\bigcap_{k =0}^m B_k^U \in I^+_{\mathcal{F}\cap U}$ for all $m<n, n \in \omega$, (i.e., each finite intersection $\bigcap_{k =0}^m B_k^U$ belongs to $I^+_{\mathcal{F}\cap U}$), but $\bigcap_{k \in \omega} B_k^U = \emptyset$. 

For any $ k\in \omega$ define $B_k = \bigcup_{U \in \mathcal{U}} B_k^U$. Obviously, $B_k \in I^+_{\mathcal{F}}$. Indeed. The set $\bigcup_{\alpha \in B^U_k} F_\alpha$ is non-meager. Then, by Union Theorem, \cite[p. 82]{KK1}, the set $\bigcup_{U \in \mathcal{U}}\bigcup_{\alpha \in B^U_k} F_\alpha$ is also non-meager and $\bigcup \mathcal{U}$ is dense in $X$.

Now, choose a sequence $(x_m)_{m \in \omega}$ of elements of $X(I_\mathcal{F})$ having the following properties:
\begin{itemize}
	\item [(1)] $x_m = (x_m(k))_{k \in \omega}$, where $x_m(k) = B_k$ for $k \leqslant m$ and arbitrary element of $I^+_\mathcal{F}$ for $k>m$ but such that $x_m \in X(I_\mathcal{F})$. 
	\item [(2)] $d(x_m, x_n) \leqslant \frac{1}{2^{n-m}}$, for all $m\leqslant n, m, n \in \omega$ (where $d$ means  the metric in $X(I_\mathcal{F})$). 
\end{itemize}
Since $\bigcap_{k =0}^m B_k \in I^+_\mathcal{F}, m \in \omega$, the sequence $(x_m)_{m \in \omega}$ fulfills the Cauchy condition, but the limit of  $(x_m)_{m \in \omega}$ does not belong to $X(I_\mathcal{F})$, because $\bigcap_{k \in \omega} B_k = \emptyset$, thus contradicting the assumption of completeness of $X(I_\mathcal{F})$.  Hence $I_{\mathcal{F}\cap U}$ is maximal for some $U \subset X$.
\end{proof}

\begin{theorem}
	Let $X$ be a complete metric space with $K$-partition $\mathcal{F}$ of cardinality $\kappa$, where $\kappa = \min \{|\mathcal{G}| \colon \mathcal{G} \textrm{ is a $K$-partition of } X\}$ is regular, and let $I_{\mathcal{F}}$ be a $K$-ideal associated with $\mathcal{F}$. If $X(I_{\mathcal{F}})$ is complete, then $\kappa$ is measurable. 
\end{theorem}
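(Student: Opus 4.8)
The plan is to verify directly the three defining properties of a measurable cardinal from Definition~2.6, namely to produce a non-principal, maximal, $\kappa$-complete ideal on $\kappa$. The maximality is exactly what Proposition~1 buys us: since $X(I_{\mathcal F})$ is complete, there is a nonempty open $U\subseteq X$ for which $I:=I_{\mathcal F\cap U}$ is maximal. I would therefore fix this $U$ once and for all and show that $I$ is, up to a negligible set, a non-principal $\kappa$-complete ideal on $\kappa$; measurability of $\kappa$ then follows immediately. That $\kappa$ is uncountable is automatic, for otherwise $X=\bigcup_{\alpha<\omega}F_\alpha$ would be a countable union of meager sets and hence meager, contradicting that $X$ is Baire.

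First I would settle the ``where the ideal lives'' bookkeeping. Writing $B_U=\{\alpha<\kappa:F_\alpha\cap U\neq\emptyset\}$, the family $\mathcal F\cap U$ is a $K$-partition of $U$ indexed by $B_U$, and $\kappa\setminus B_U\in I$ because its associated union is empty. Since meagerness in $X$ implies meagerness in $U$, we have $[\kappa]^{<\kappa}\subseteq I_{\mathcal F}\subseteq I$, so $I$ contains all singletons and is non-principal. As $U$ is a nonempty open subset of the Baire space $X$ it is non-meager, so $I$ is a proper ideal; being maximal, its dual filter is an ultrafilter containing no set of size $<\kappa$, i.e.\ a uniform free ultrafilter. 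In particular every $I$-positive set has cardinality $\kappa$, and since $B_U\in I^{+}$ we get $|B_U|=\kappa$. Thus $I$ is, after discarding the negligible set $\kappa\setminus B_U$, a non-principal maximal ideal on a set of size $\kappa$.

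The heart of the proof is $\kappa$-completeness, and this is the step I expect to be the main obstacle. By maximality, $\kappa$-completeness of $I$ is equivalent to the assertion that $\kappa$ cannot be partitioned into fewer than $\kappa$ sets each lying in $I$. So I would assume toward a contradiction that $\kappa=\bigsqcup_{\xi<\lambda}A_\xi$ with $\lambda<\kappa$ and each $A_\xi\in I$. Regrouping the pieces of $\mathcal F\cap U$ along this partition, the sets $G_\xi=\bigcup_{\alpha\in A_\xi}(F_\alpha\cap U)$ are meager in $U$, they partition $U$, and every subfamily union among them is again a union of a subfamily of $\mathcal F\cap U$, hence has the Baire property by 2.2. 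Consequently $\{G_\xi:\xi<\lambda\}$ is a $K$-partition of $U$ of cardinality at most $\lambda<\kappa$. The remaining, and delicate, point is to turn this into a genuine contradiction with the minimality defining $\kappa$. Minimality is stated for $X$, whereas we have produced a small $K$-partition only of the open set $U$; as simple clopen-sum examples show, the minimal $K$-partition cardinality need not be inherited by arbitrary open subsets. I therefore expect the real work to lie in exploiting the completeness hypothesis on $X(I_{\mathcal F})$, equivalently the precipitousness from Fact~4 together with the everywhere-precipitous/maximal structure isolated in Proposition~1, to guarantee that the particular $U$ carrying the maximal ideal is homogeneous enough that $\kappa$ remains the minimal $K$-partition cardinality of $U$ (or, alternatively, to regroup the $A_\xi$ back over all of $X$ into a genuinely meager partition). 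Granting this, $\{G_\xi:\xi<\lambda\}$ contradicts minimality, so $I$ is $\kappa$-complete; combined with the previous paragraph, $I$ witnesses that $\kappa$ is measurable.
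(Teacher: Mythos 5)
Your skeleton is sound and partly coincides with the paper's: you use Proposition~1 to get a nonempty open $U$ with $I=I_{\mathcal F\cap U}$ maximal, and the bookkeeping showing $I$ is a proper, non-principal ideal containing $[\kappa]^{<\kappa}$ is correct. But the proof has a genuine gap at exactly the point you flag yourself: $\kappa$-completeness. Your regrouping of $\mathcal F\cap U$ along a putative partition $\kappa=\bigsqcup_{\xi<\lambda}A_\xi$, $A_\xi\in I$, $\lambda<\kappa$, only produces a $K$-partition of the open subspace $U$ of cardinality $\leqslant\lambda$, and, as you correctly observe, the minimality of $\kappa$ is a hypothesis about $X$, not about $U$: a small $K$-partition of $U$ does not extend to one of $X$ (nothing controls the open set $X\setminus\overline{U}$), so no contradiction follows. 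Neither Proposition~1 nor the completeness of $X(I_{\mathcal F})$ by itself supplies the ``homogeneity'' of $U$ you hope for, and the phrase ``granting this'' leaves the central step of the theorem unproven. So what you have is an honest outline with the key lemma missing, not a complete proof.

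The paper routes around this problem rather than solving it directly: it invokes Fact~1 (from the cited paper [FJ]) to get an open set $U$ on which $I_{\mathcal F\cap U}$ is \emph{precipitous}, and in this paper's conventions (Definition~2.4) precipitousness is only defined for $\kappa$-complete ideals containing singletons, so $\kappa$-completeness comes packaged with Fact~1 --- i.e., the localization work you are missing is exactly what is outsourced to that cited result. The paper's proof is then: Fact~1 gives precipitousness (hence $\kappa$-completeness), the remark in 2.3 gives non-principality, and Proposition~1 gives maximality, whence $\kappa$ is measurable. To repair your argument within the paper's toolkit, replace your attempted direct proof of $\kappa$-completeness by an appeal to Fact~1, applied so that the open set carrying the precipitous (hence $\kappa$-complete) ideal and the open set carrying the maximal ideal can be taken to be the same --- e.g., by running Proposition~1 inside the open set furnished by Fact~1, as the paper implicitly does with its ``everywhere precipitous'' reduction.
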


\begin{proof}
		Let $\mathcal{F}$ be a $K$-partition of $X$ of cardinality $\kappa$. Let $I_{\mathcal{F}}$ be a $K$--ideal associated with $\mathcal{F}$. By Fact 1, there exists a non-empty open set $U \subseteq X$ such that $I_{\mathcal{F}\cap U}$ is a precipitous ideal. Without loss of generality, we can assume that $I_{\mathcal{F}\cap U}$ is everywhere precipitous and hence is $\kappa$-complete. By the remark given in Section 2.3, $I_{\mathcal{F}\cap U}$ is non-principal. By Proposition 1, $I_{\mathcal{F}\cap U}$ is maximal. Hence, $\kappa$ is measurable.
\end{proof}

\begin{theorem}
	Let $\kappa$ be a regular  and  the smallest real-valued measurable cardinal such that $\aleph_1 < \kappa \leqslant 2^{\aleph_0}$. Then, there exists a complete metric  space of cardinality not greater than $2^{\kappa}$ which admits $K$-partition.
\end{theorem}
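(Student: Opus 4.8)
The plan is to realize the required space as the completion of a Frankiewicz--Kunen space $X(I)$ attached to a precipitous ideal extracted from the real-valued measurable structure of $\kappa$, and then to transport the $K$-partition supplied by Fact 5 onto that completion. First I would fix the ideal: since $\kappa$ is real-valued measurable with $\aleph_1<\kappa\leqslant 2^{\aleph_0}$, Fact 8 yields a nontrivial $\kappa$-additive measure $\mu$ on $\kappa$ whose null ideal $I=\{A\subseteq\kappa:\mu(A)=0\}$ is nonprincipal; by $\kappa$-additivity $I$ is $\kappa$-complete (so $[\kappa]^{<\kappa}\subseteq I$), by Fact 9 it is $\aleph_1$-saturated, and by the remark following Fact 9 it is therefore precipitous. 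The point of the hypothesis is precisely that it manufactures a precipitous ideal on a cardinal $\kappa\leqslant 2^{\aleph_0}$ (which one cannot simply assume, by Fact 3(a)), and Fact 6 records that this is possible exactly in the range $\kappa\leqslant 2^{\aleph_0}$, where the cardinality bound will be spent.

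By Fact 4 the space $X(I)\subseteq (I^+)^\omega$ is then Baire, and by Fact 5 it carries a $K$-partition $\mathcal F=\{F_\alpha:\alpha<\kappa\}$. Since $I^+\subseteq P(\kappa)$, the ambient complete metric space $(I^+)^\omega$ (with $I^+$ discrete) has cardinality $|I^+|^{\aleph_0}\leqslant (2^\kappa)^{\aleph_0}=2^\kappa$, so every subspace in sight — in particular the one I shall build — has cardinality at most $2^\kappa$.

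The space $X(I)$ itself cannot be used directly, since it is not complete: as $\kappa\leqslant 2^{\aleph_0}$ is not measurable, Theorem 1 forbids completeness, and concretely the sets $B_k$ produced exactly as in the proof of Proposition 1 (available here because $\mu$ is atomless and $\aleph_1$-saturated, so $\kappa$ splits into countably many positive pieces) give a Cauchy sequence in $X(I)$ whose limit escapes it. I would therefore pass to the closure of $X(I)$ in $(I^+)^\omega$, namely $\widehat X=\{x\in(I^+)^\omega:\bigcap_{m<n}x(m)\in I^+\text{ for all }n\}$. Each defining condition depends on finitely many coordinates, so $\widehat X$ is closed, hence a complete metric space of cardinality at most $2^\kappa$; moreover $X(I)$ is dense in $\widehat X$, and the adjoined boundary is $\partial=\{x\in\widehat X:\bigcap_n x(n)=\emptyset\}$.

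It remains to equip $\widehat X$ with a $K$-partition, and this is the step I expect to be the main obstacle. The natural move is to keep the classes $F_\alpha$, absorb $\partial$ into (say) $F_0$, and verify that each class is meager and each subunion has the Baire property in $\widehat X$. For the classes this is routine: a set nowhere dense in the dense subspace $X(I)$ is nowhere dense in $\widehat X$, so each $F_\alpha$ is meager in $\widehat X$. The crux is the boundary: I would aim to prove that $X(I)$ is comeager in $\widehat X$, equivalently that $\partial$ is meager. Granting this, $F_0\cup\partial$ is meager, and for any $A\subseteq\kappa$ the representation of $\bigcup_{\alpha\in A}F_\alpha$ as $(O\cap X(I))\triangle M$ given by Fact 5 (with $O$ open in $\widehat X$ and $M$ meager) yields $\bigcup_{\alpha\in A}F_\alpha\,\triangle\,O\subseteq\partial\cup M$, which is meager in $\widehat X$; hence every subunion of the extended partition has the Baire property, and $\widehat X$ is the desired complete space admitting a $K$-partition. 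The hard part is thus the category computation $\partial$ meager. I expect it to rest on precipitousness, through the same mechanism that underlies Fact 4 (precipitousness $\Leftrightarrow$ $X(I)$ Baire): analysing the Banach--Mazur game on $\widehat X$ with target $X(I)$, where a winning strategy for the ``nonempty'' player — built from $\kappa$-completeness, the regularity of $\kappa$, and precipitousness — would force $\bigcap_n x(n)\neq\emptyset$ for comeager many branches. Should this comeagerness prove too delicate, the fallback is to show directly that each set $\{x\in\widehat X:\bigcap_n x(n)\cap A\neq\emptyset\}=\bigcup_{\alpha\in A}\{x:\forall n\ \alpha\in x(n)\}$, an a priori uncountable union of closed nowhere dense sets, differs by a meager set from the canonical open set $\bigcup\{[s]:\mathrm{stem}(s)\setminus A\in I\}$, using the countable chain condition of $P(\kappa)/I$ coming from $\aleph_1$-saturation to rule out Bernstein-like behaviour.
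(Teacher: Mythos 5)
Your construction is sound right up to the step you yourself flag as the obstacle, and everything before it does reduce the theorem to that single claim: the closure $\widehat{X}$ is complete, has cardinality at most $2^\kappa$, contains $X(I)$ densely, each $F_\alpha$ remains meager in $\widehat{X}$, and the Baire-property transfer works once the boundary $\partial=\widehat{X}\setminus X(I)$ is meager. But that remaining step is a genuine gap, not a deferred technicality. By the Banach--Mazur theorem, ``$\partial$ is meager in $\widehat{X}$'' is equivalent to the \emph{Nonempty} player having a winning strategy in the ideal game on $I$ (players alternately shrink $I$-positive sets, Nonempty wins when the total intersection is nonempty). What precipitousness gives -- and this is exactly the content of Fact 4 -- is only that the \emph{Empty} player has no winning strategy, i.e.\ that $X(I)$ is a Baire space, hence nonmeager in every relatively open set of $\widehat{X}$; it does not give comeagerness. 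These two game-theoretic properties are far apart, and the game need not be determined: this gap is precisely the subject of Galvin--Jech--Magidor's ``An ideal game.'' Worse, for the null ideal of a real-valued measurable $\kappa$ the standard routes to a Nonempty strategy are provably closed: a dense family of positive sets along which countable decreasing intersections are automatically nonempty would follow from a $\sigma$-closed dense subset of $P(\kappa)/I$ or from inner regularity with respect to a compact class, but $P(\kappa)/I$ is an atomless ccc measure algebra (so it has no $\sigma$-closed dense subset -- it adds a random real), and $\mu$-positive sets can have Lebesgue inner measure zero, so compactness is unavailable. Note where the difficulty lives: Empty can always drive $\mu(x(n))\to 0$ since $\mu$ is atomless, so by countable additivity the intersection along any play has measure zero; whether it is \emph{empty} is then invisible to the quotient algebra. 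For the same reason your fallback cannot work as stated: the ccc of $P(\kappa)/I$ controls positive-measure phenomena, while $\partial$ is entirely a measure-zero phenomenon.

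It is worth contrasting this with the paper's own proof, which never touches $X(I)$ and so never meets this issue. There the complete space is the measure algebra itself, $Y=\{[A]\colon A\subseteq[0,1]\}$ with metric $\rho([A],[B])=\mu(A\triangle B)$, whose completeness is immediate from countable additivity, and the partition is defined combinatorially from ``support points'': $F_\alpha$ consists of those classes $[A]$ for which $x_\alpha$ is the least point of $[0,1]$ all of whose neighbourhoods meet $A$ in positive $\mu$-measure, with the Baire property of subunions witnessed by explicit dense open sets $V(B)$. In other words, the paper builds completeness in from the start rather than completing a Baire space afterwards, and thereby needs nothing beyond precipitousness-free measure theory; your route, by contrast, requires a strengthening of precipitousness (a winning strategy for Nonempty) that the cited facts do not supply and that the measure-algebra structure actively obstructs.
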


\begin{proof} For simplification we can assume that $X=[0,1]$. \\
	Let $\mu\colon P([0,1]) \to [0,1]$ be a nontrivial $\kappa$-additive measure. Then, by Fact 6, we can assume that $\mu$ extends everywhere a Lebesgue measure  on $[0,1]$. 
	Let $A, B \in P([0,1])$ be $\mu$-measurable sets. Define a relation 
	$$A \sim B \textrm{ iff } \mu(A\triangle B) = 0,$$
	where $\triangle$ indicates  the symmetric difference of sets. 
	Note that $\sim$ above is the equivalence relation. If $A \in P([0,1])$ is  $\mu$-measurable, then $[A]$  denotes the equivalence class determined by  $A$.
	Define a metric
	$$\rho([A], [B]) = \mu(A\triangle B).$$ 
	Since $A, B \in P([0,1])$ are $\mu$-measurable, $\rho$ is well defined.
	
Define
	$$Y = \{[A] \colon A \in P([0,1]), A \textrm{ is } \mu-\textrm{measurable}\}.$$
	The space $(Y, \rho)$ is complete. Indeed.  Let $([A_n])_{n \in \omega}$ be a sequence fulfilling the Cauchy condition. Then, $[\bigcap_{n \in \omega}\bigcup_{k \in \omega}A_{n+k}]$ is its limit point.
	
	Enumerate the elements of the interval $[0,1]$ by $\{x_\alpha \colon \alpha < \mathfrak{c}\}$.
	Let $U_{x_\alpha}$ denotes a neighbourhood of $x_\alpha$, $\alpha < \mathfrak{c}$, ($U_{x_\alpha} \subset [0,1]$ is an "open" generator such that $\mu(U_{x_\alpha})>0$).
	
	 For any $\alpha < \mathfrak{c}$, define 
	$$F_\alpha =\{[A]\in Y \colon \alpha = \min \{\beta < \mathfrak{c} \colon \forall_{U_{x_\beta}}\  \mu(U_{x_\beta} \cap A) >0\}\}.$$
	Obviously $F_\alpha \cap F_\beta = \emptyset$ for any $\alpha, \beta < \mathfrak{c}$, $\alpha \not = \beta$ and $F_\alpha$ is meager in $Y$, for any $\alpha < \mathfrak{c}$, (because $Y$ as a complete metric space, hence fulfills the Baire Theorem).

	Now, let $B$ be a subset of indices of the family $\{F_\alpha \colon \alpha < \mathfrak{c}\}$ and let  $\mu(B)>0$, then $\bigcup_{\alpha \in B}F_\alpha$ has the Baire property because it contains $$V(B)=\{[A] \in Y \colon \exists_{\alpha \in B}\ \exists_{U_{x_\alpha}}\  \mu(U_{x_\alpha} \cap A)>0\},$$ which is open and dense. Indeed.  For arbitrary $[A] \in V(B)$, we have $$\forall_{U_{x_\beta}}\ \mu(U_\beta \cap A) > 0 \textrm{ implies } \exists_{U_{x_\beta}}\  \mu(U_{x_\beta} \cap A)>0$$ and   $$\min \{\beta < \mathfrak{c} \colon \forall_{U_{x_\beta}}\ \mu(U_{x_\beta}\cap A)>0\} \in B.$$ 
	
	In the case where $\mu(B)=0$, there exists an open and dense $G_\delta$ - set $$V(B')=\{[A]\in Y \colon \mu(U_{x_\alpha} \cap A)>0 \textrm{ for some } U_{x_\alpha} \textrm{ and } \alpha \not \in B\}$$ which is contained in $\bigcup_{\alpha \not \in B} F_\alpha $.
	Hence, $\{F_\alpha \colon \alpha < \mathfrak{c}\}$ is a $K$-partition of $Y$.
\end{proof}
\\

Note that the above result is true in ZFC only, and the given space has to have density greater than $2^{\aleph_0}$ (see \cite{EFK}).

\section{Consequences}

In this section, we present the consequences of Theorem 2. The interested reader can find still more consequences in \cite{JJ1}.
\\

A map $f \colon X \to Y$ has \textit{the Baire property} iff for each open set $V \subset Y$, $f^{-1}(V)$ has the Baire property.
\\\\
\textbf{Fact 11 (\cite{FJW})}  Let $X, Y$ be topological spaces and $A \subset X$. The following statements are then equivalent:
	\\
	(a) The set $A$ does not admit Kuratowski partition.
	\\
	(b) For any mapping $f \colon A \to Y$ having the Baire property, there exists a meager set $M \subset A$ such that $f\upharpoonright(A\setminus M)$ is continuous.
\\

The immediate corollary following from Fact 2, Fact 11, and Theorem 2 is as follows:

\begin{corollary}
The following theories are consistent:
\\
(1) ZFC + "there is a measurable cardinal",
\\
(2) ZFC + "there is a complete metric space $X$ of cardinality not greater than $2^\mathfrak{c}$ and a function $f \colon X \to Y$ having the Baire property such that there is no meager set $M \subseteq X$ for which $f\upharpoonright(X \setminus M)$ is continuous".
\end{corollary}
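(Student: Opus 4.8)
The plan is to read the assertion as an equiconsistency statement---the consistency of ZFC with (1) holds iff the consistency of ZFC with (2) holds---and to prove it by producing, from the consistency of a measurable cardinal, a model witnessing (2), with the converse coming essentially for free from Fact 2 and Fact 11. First I would move from the consistency of ZFC together with a measurable cardinal to a universe carrying a real-valued measurable cardinal below the continuum. By Fact 7 the former theory is equiconsistent with the theory ``Lebesgue measure has a countably additive extension $\mu$ to every set of reals'', and in any model of the latter the additivity $\mathrm{add}(\mu)$ is a real-valued measurable cardinal $\kappa \leqslant 2^{\aleph_0}$ (Fact 6 recording the converse passage). I would then take $\kappa$ least with this property. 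Since every real-valued measurable cardinal is weakly inaccessible, $\kappa$ is regular and $\aleph_1 < \kappa \leqslant 2^{\aleph_0}$, so the full hypothesis of Theorem 2 is in force.

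Next I would apply Theorem 2 inside this model to obtain a complete metric space $X$ of cardinality at most $2^{\kappa}$ that admits a $K$-partition. Because $\kappa \leqslant 2^{\aleph_0} = \mathfrak{c}$, we have $2^{\kappa} \leqslant 2^{\mathfrak{c}}$, so $X$ has cardinality at most $2^{\mathfrak{c}}$, which is exactly the size bound demanded in (2). Completeness, the cardinality bound, and the existence of the $K$-partition are thereby secured simultaneously; what remains is to convert the partition into the desired failure of continuity modulo a meager set.

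That conversion is the content of Fact 11. Applying it with $A = X$ in the contrapositive direction: since $X$ admits a $K$-partition, clause (a) of Fact 11 fails, hence so does clause (b), and the failure of (b) is literally the statement that there is a map $f \colon X \to Y$ (with $Y$ metrizable) having the Baire property for which no meager $M \subseteq X$ makes $f \upharpoonright (X \setminus M)$ continuous. This is assertion (2), so $\mathrm{Con}(\mathrm{ZFC} + (1))$ yields $\mathrm{Con}(\mathrm{ZFC} + (2))$. For the reverse direction I would run Fact 11 forward: a model of (2) contains a complete, hence Baire, metric space on which some Baire-property map cannot be made continuous off a meager set, so that space admits a $K$-partition, and Fact 2 then returns the consistency of a measurable cardinal. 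The two implications together give the claimed equiconsistency.

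I expect the only genuine difficulty to sit in the first paragraph, namely in arranging the real-valued measurable cardinal to meet the exact hypotheses of Theorem 2 while keeping the consistency-strength accounting honest. One must verify that Solovay's construction really delivers a real-valued measurable $\kappa \leqslant 2^{\aleph_0}$ that may be taken least, regular, and strictly above $\aleph_1$, and that the transition ``measurable $\leftrightarrow$ real-valued measurable below $\mathfrak{c}$'' is the genuine equiconsistency recorded in Facts 6 and 7 rather than a one-way implication. Once that point is pinned down, the appeals to Theorem 2 and to both directions of Fact 11 are routine bookkeeping.
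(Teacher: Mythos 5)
Your proposal is correct and follows essentially the same route as the paper, which presents the corollary as an immediate consequence of Fact 2, Fact 11, and Theorem 2: you apply Theorem 2 (after securing its hypothesis via the Solovay equiconsistency in Facts 6 and 7) to get the complete metric space with a $K$-partition, convert it to the discontinuous Baire-property function by the contrapositive of Fact 11, and recover the converse direction from Fact 11 together with Fact 2. Your write-up merely makes explicit the bookkeeping (regularity, $\aleph_1 < \kappa \leqslant 2^{\aleph_0}$, $2^{\kappa} \leqslant 2^{\mathfrak{c}}$) that the paper leaves implicit.
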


\begin {thebibliography}{123456}
\thispagestyle{empty}

\bibitem{LB} L. Bukovsk\'y,    
Any partition into Lebesgue measure zero sets produces a non-measurable set, 
Bull. Acad. Polon. Sci. S\'er. Sci. Math. 27(6) (1979) 431--435.

\bibitem{EFK}  A. Emeryk, R. Frankiewicz and W. Kulpa,
On functions having the Baire property,
Bull. Ac. Pol.: Math.  27 (1979) 489--491.

\bibitem{FJ} R. Frankiewicz and J. Jureczko, Partitions of non-complete Baire metric spaces, (submitted), (https://arxiv.org/abs/2003.10307).

\bibitem{FJW} R. Frankiewicz, J. Jureczko, B. Weglorz, On Kuratowski partitions in the Marczewski and Laver structures and Ellentuck topology. Georgian Math. J. 26(4) (2019), 591--598.

\bibitem{FK}  R. Frankiewicz and K. Kunen,
Solutions of Kuratowski's problem on functions having the Baire property, I,
Fund. Math. 128(3) (1987) 171--180.

\bibitem {TJ}  T. Jech, 
Set Theory,
The third millennium edition, revised and expanded. Springer Monographs in Mathematics. Springer-Verlag, Berlin, 2003.

\bibitem{JJ} J. Jureczko,
The new operations on complete ideals,  Open Math. 17(1) (2019), 415--422.

\bibitem{JJ1} J. Jureczko, Kuratowski partitions, (preprint).

\bibitem{KK}  K. Kuratowski,
Quelques problem\'es concernant les espaces m\'etriques nonseparables,
Fund. Math. 25 (1935) 534--545.

\bibitem{KK1} K. Kuratowski,
Topology, vol. 1,
Academic Press, New York and London, 1966.

\bibitem{NL} N. Lusin. Sur les proprietes des fonctions mesurables, Comptes Rendus Acad. Sci. Paris 154 (1912), 1688--1690.

\bibitem{SS} S. Saks, Theory of the integral. Second revised edition. English translation by L. C. Young. With two additional notes by Stefan Banach Dover Publications, Inc., New York 1964.

\bibitem{RS} R. M. Solovay. Real-valued measurable cardinals. Axiomatic set theory (Proc. Sympos. Pure Math., Vol. XIII, Part I, Univ. California, Los Angeles, Calif., 1967), 397--428. Amer. Math. Soc., Providence, R.I., 1971.

\bibitem{SU} S. Ulam, Zur Masstheorie in der allgemeinen Mengenlehre, Fund. Math, 16 (1930), 140--150.

\end {thebibliography}

{\sc Ryszard Frankiewicz}
\\
Silesian University of Technology, Gliwice, Poland
\\
{\sl e-mail: ryszard.frankiewicz@polsl.pl}
\\

{\sc Joanna Jureczko}
\\
Wroc\l{}aw University of Science and Technology, Wroc\l{}aw, Poland
\\
{\sl e-mail: joanna.jureczko@pwr.edu.pl}

\end{document}